  \newfont\fiverm{cmr5} 
\newtheorem{thm}{Theorem}[section]
\newtheorem{lem}[thm]{Lemma}
\newtheorem{rmk}[thm]{Remark}
\newtheorem{thm-con}[thm]{Theorem-Conjecture}
\numberwithin{equation}{section}
\theoremstyle{definition}
\newcommand{\f}{\Bbb F}
\newcommand{\pf}{\Bbb P^1(\f_q)}
\begin{document}

\title[Carlitz formula]{A power sum formula by Carlitz and its applications to permutation rational functions of finite fields}

\author[Xiang-dong Hou]{Xiang-dong Hou}
\address{Department of Mathematics and Statistics,
University of South Florida, Tampa, FL 33620}
\email{xhou@usf.edu}

\keywords{Carlitz's formula, finite field, permutation, rational function}

\subjclass[2010]{11T06, 11T55}

\begin{abstract}
A formula discovered by L. Carlitz in 1935 finds an interesting application in permutation rational functions of finite fields. It allows us to determine all rational functions of degree three that permute the projective line $\Bbb P^1(\f_q)$ over $\f_q$, a result previously obtained by Ferraguti and  Micheli through a different method. It also allows us to determine all rational functions of degree four that permute $\Bbb P^1(\f_q)$ under a certain condition. (A complete determination of all rational functions of degree four that permute $\Bbb P^1(\f_q)$ without any condition will appear in a separate forthcoming paper.)
\end{abstract}

\maketitle

\section{Introduction}

Let $\f_q$ denote the finite field with $q$ elements. In 1935, while studying what later referred as the Carlitz module, L. Carlitz \cite{Carlitz-DMJ-1935} found a closed formula for the sum $\sum f^{-k}$, where $k$ is a fixed positive integer and $f$ runs through all monic polynomials of degree $d$ in $\f_q[X]$; also see \cite{Hicks-Hou-Mullen-AMM-2012} and \cite[\S 2.3]{Thakur-FFA-2009}. A special case of Carlitz's formula \cite[Theorem~9.4]{Carlitz-DMJ-1935} states that 
\[
\sum_{x\in\f_q}\frac 1{(X+x)^k}=\frac 1{(X-X^q)^k},\quad 1\le k\le q.
\]
(Note: In \cite[Theorem~9.4]{Carlitz-DMJ-1935}, the right side should be $(-1)^{km}/L_k^m$; see \cite[Theorem~2.2]{Hicks-Hou-Mullen-AMM-2012}.) For our purpose, it is more convenient to write the above equation as
\begin{equation}\label{1.1}
\sum_{x\in\f_q}\frac 1{(x-X)^k}=\frac 1{(X^q-X)^k},\quad 1\le k\le q.
\end{equation}
The main purpose of the present paper is to demonstrate a nice application of \eqref{1.1} in permutation rational functions of finite fields.

Let $\Bbb P^1(\f_q)=\f_q\cup\{\infty\}$ be the projective line over $\f_q$. A rational function in $\f_q(X)$ defines a mapping from $\Bbb P^1(\f_q)$ to itself. For $0\ne f(X)=P(X)/Q(X)\in\f_q(X)$, where $P,Q\in\f_q[X]$ and $\text{gcd}(P,Q)=1$, we define $\deg f=\max\{\deg P,\deg Q\}$. Let $G(q)=\{\phi\in\f_q(X):\deg\phi=1\}$. Then $(G(q),\circ)$ is a group which is isomorphic to $\text{PGL}(2,\f_q)$. Elements of $G(q)$ induce permutations of $\Bbb P^1(\f_q)$.  Two rational functions $f,g\in\f_q(X)$ are called {\em equivalent} if there exist $\phi,\psi\in G(q)$ such that $f=\phi\circ g\circ \psi$. A polynomial $f(X)\in\f_q[X]$ that permutes $\f_q$ is called a {\em permutation rational polynomial} (PP) of $\f_q$; a rational function $f(X)\in\f_q(X)$ that permutes $\Bbb P^1(\f_q)$ is called a {\em permutation rational function} (PR) of $\Bbb P^1(\f_q)$. There is an extensive literature on permutation polynomials. In particular, PPs of degree $\le 7$, including degree 8 in characteristic 2, have been classified \cite{Dickson-AM-1896, Fan-arXiv1812.02080, Fan-arXiv1903.10309, Li-Chandler-Xiang-FFA-2010, Shallue-Wanless-FFA-2013}. However, little is known about PRs of low degree of $\pf$. Recently, Ferraguti and Micheli \cite{Ferraguti-Micheli-arXiv1805.03097} classified PRs of degree $3$ of $\pf$ using the Chebotarev theorem for function fields. We will see that \eqref{1.1} allows us to reach the same conclusion fairly quickly. Moreover, \eqref{1.1} allows us to determine all PRs of degree $4$ of $\pf$ under a certain condition. A complete determination of all PRs of degree $4$ of $\pf$ without any condition will appear in a separate forthcoming paper \cite{Hou-ppt}. 

Section~2 describes in general how \eqref{1.1} is used in the study of permutation properties of rational functions. PRs of $\pf$ of degree 3 and degree 4 are discussed in Sections~3 and 4, respectively. Some computations are beyond manual capacity, but all of them are easily handled with computer assistance.

\section{Power Sums of Rational Functions} 

Let $f(X)\in\f_q(X)$. Under equivalence, we may assume that $f(\infty)=\infty$; such a rational function permutes $\pf$ if and only if it permutes $\f_q$. By Hermite's criterion (\cite[Lemma~2.21]{Hou-ams-gsm-2018}, \cite[Lemma~7.3]{Lidl-Niederreiter-FF-1997}), $f(X)$ permutes $\f_q$ if and only if 
\begin{equation}\label{2.1}
\sum_{x\in\f_q}f(x)^s=\begin{cases}
0&\text{if}\ 1\le s\le q-2,\cr
-1&\text{if}\ s=q-1.
\end{cases}
\end{equation}
Let the partial fraction decomposition of $f(X)^s$ be
\begin{equation}\label{2.2}
f(X)^s=\sum_{i=0}^ma_iX^i+\sum_{i=1}^n\frac{b_i}{(X-r_i)^{k_i}},
\end{equation}
where $a_0,\dots,a_m\in\f_q$, $b_1,\dots,b_n\in\overline \f_q$ (the algebraic closure of  $\f_q$), $r_1,\dots,r_n,\in\overline\f_q\setminus\f_q$, and $k_1,\dots,k_n$ are positive integers. Then
\begin{equation}\label{2.3}
\sum_{x\in\f_q}f(x)^s=-\sum_{\substack{0<i\le m\cr i\equiv 0\,(\text{mod}\,q-1)}}a_i+\sum_{i=1}^n b_i\sum_{x\in\f_q}\frac 1{(x-r_i)^{k_i}}.
\end{equation}
If $m<q-1$ and $k_i\le q$ for all $1\le i\le n$, then by \eqref{1.1} we have
\begin{equation}\label{2.4}
\sum_{x\in\f_q}f(x)^s=\sum_{i=1}^n \frac{b_i}{(r_i^q-r_i)^{k_i}}.
\end{equation}
Our approach is based on \eqref{2.4}.

\section{PRs of Degree $3$}

A polynomial $f(X)\in\f_q[X]$ permutes $\pf$ if and only if $f(X)$ is a PP of $\f_q$. PPs of low degree of $\f_q$ are known. For example, a cubic PP of $\f_q$ is equivalent to (i) $X^3$ where $q\not\equiv 1\pmod 3$, or (ii) $X^3-aX$ where $\text{char}\,\f_q=3$ and $a$ is a nonsquare in $\f_q$; a quartic PP of $\f_q$ is equivalent to (i) $X^4+3X$ where $q=7$, or (ii) $X^4+aX^2+bX\in\f_q[X]$ where $q$ is even and $X^4+aX^2+bX$ has no nonzero root in $\f_q$; see \cite[Table~7.1]{Lidl-Niederreiter-FF-1997}.

When considering PRs of $\pf$ of low degree, we are mainly interested in those that are not equivalent to polynomials. It is easy to see that if $f(X)$ is a PR of $\pf$ of degree $d$ which is not equivalent to a polynomial, then $f(X)$  is equivalent to $P(X)/Q(X)$, where $P,Q\in\f_q[X]$, $\text{gcd}(P,Q)=1$, $d=\deg P>\deg Q$ and $Q$ has no root in $\f_q$.

Let $f(X)$ be a PR of $\pf$ of degree $3$ which is not equivalent to a polynomial. Then $f(X)$ equivalent to
\begin{equation}\label{3.2}
X+\frac b{X-r}+\frac{b^q}{X-r^q},
\end{equation}
where $r\in\f_{q^2}\setminus\f_q$ and $b\in\f_{q^2}^*$. If $\text{char}\,\f_q\ne 2$, we may further assume that $r+r^q=0$.

\begin{lem}\label{L3.2}
Let $f$ denote the rational function in \eqref{3.2}. If $f$ is a PR of $\Bbb P^1(\f_q)$, then $b\in\f_q^*$.
\end{lem}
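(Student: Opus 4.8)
The plan is to apply the power-sum machinery of Section~2 with $s=1$ and $s=2$ to the rational function $f$ in \eqref{3.2} and extract enough arithmetic information about $b$ to force $b\in\f_q^*$. Since $f(\infty)=\infty$ and $f$ has the form $X + \text{(proper fraction)}$, the partial fraction decomposition of $f(X)^s$ has polynomial part of degree $s < q-1$ (for $s$ small) and all denominator exponents are at most $s\le q$, so \eqref{2.4} applies. For $s=1$ we get immediately
\[
\sum_{x\in\f_q} f(x) = \frac{b}{r^q-r} + \frac{b^q}{r^{q^2}-r^q} = \frac{b}{r^q-r} + \frac{b^q}{r-r^q} = \frac{b - b^q}{r^q - r}.
\]
By Hermite \eqref{2.1} this vanishes, so $b = b^q$, i.e. $b\in\f_q^*$ — wait, that already does it, provided $q-2\ge 1$, i.e. $q\ge 3$. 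So the main content is simply: compute $\sum_{x\in\f_q}f(x)$ via \eqref{2.4}, observe it equals $(b-b^q)/(r^q-r)$, and invoke the $s=1$ case of Hermite's criterion.

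The one case this argument misses is $q=2$ (and possibly $q=3$, where $s=q-1=2$ is the only Hermite condition of a different flavor, but $s=1$ is still available there since $1\le q-2$ fails only when $q=2$). For $q=2$ one checks directly: $\f_4\setminus\f_2 = \{r,r^2\}$ with $r^2=r+1$, and one verifies by hand (or notes that $\deg f = 3 > q+1 = 3$ is borderline) whether such an $f$ can permute $\Bbb P^1(\f_2)$; in fact for $q=2$ the claim $b\in\f_q^*=\{1\}$ must be checked against the short list of degree-$3$ PRs of $\Bbb P^1(\f_2)$, or one simply excludes $q=2$ as a trivial exceptional case handled separately (there $b\in\f_4^*$ with $b\ne b^2$ would need a direct permutation check on the four points of $\Bbb P^1(\f_2)$).

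The step I expect to be routine but worth stating carefully is the partial-fraction computation showing that \eqref{2.4} is legitimately applicable: one must confirm that in $f(X)^s$ for $s=1$ the polynomial part is exactly $X$ (degree $1 < q-1$ once $q\ge 3$) and the poles $r, r^q$ lie in $\f_{q^2}\setminus\f_q \subseteq \overline\f_q\setminus\f_q$ with exponent $1\le q$. The genuinely substantive point — and it is short — is recognizing that Hermite's criterion at the single exponent $s=1$ already pins down $b$, so no higher power sums or clever manipulation of $\text{PGL}(2,\f_q)$-equivalence are needed. The only real obstacle is bookkeeping the small-field exceptions ($q=2$), which should be dispatched by direct inspection.
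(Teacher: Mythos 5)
Your argument for $q\ge 3$ is exactly the paper's: compute $\sum_{x\in\f_q}f(x)$ via \eqref{2.4}, get $-(b-b^q)/(r-r^q)$, and invoke the $s=1$ case of Hermite's criterion \eqref{2.1} to conclude $b=b^q$. The only difference is at $q=2$, which you flag but leave to an ad hoc direct check; the paper dispatches it with the same machinery rather than a separate inspection: for $q=2$ one has $s=1=q-1$, so Hermite requires the sum to equal $-1$, and \eqref{2.3} (rather than \eqref{2.4}) gives $\sum_{x\in\f_2}f(x)=-1-(b-b^q)/(r-r^q)$ because the polynomial part $X$ now contributes $-a_1=-1$; the $-1$'s cancel and $b=b^q$ follows as before. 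So no exceptional case is actually needed, and your proof would be complete once you replace the hand-wave at $q=2$ with this one-line computation.
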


\begin{proof}
Since $f$ fixes $\infty$, it permutes $\f_q$. Let $r_1=r$, $r_2=r^q$, $b_1=b$, $b_2=b^q$. If $q>2$, by \eqref{2.1} and \eqref{2.4}, we have
\[
0=\sum_{x\in\f_q}f(x)=\frac{b_1}{r_2-r_1}+\frac{b_2}{r_1-r_2}=-\frac{b_1-b_2}{r_1-r_2}.
\] 
If $q=2$, by \eqref{2.1} and \eqref{2.3}, the above equation is modified as
\[
-1=\sum_{x\in\f_q}f(x)=-1+\frac{b_1}{r_2-r_1}+\frac{b_2}{r_1-r_2}=-1-\frac{b_1-b_2}{r_1-r_2}.
\] 
Hence we always have $b_1=b_2$, i.e., $b\in\f_q^*$.
\end{proof}

Now assume that $b\in\f_q^*$ in \eqref{3.2}. Then the rational function \eqref{3.2} is equivalent to
\begin{equation}\label{3.3}
aX+\frac 1{X-r}+\frac 1{X-r^q},
\end{equation}
where $a\in\f_q^*$. By a substitution $X\mapsto uX$, $u\in\f_q^*$, we may replace $a$ with $au^2$. Therefore, when $q$ is even, we may assume that $a=1$; when $q$ is odd, we may assume that $a$ is either $1$ or a chosen nonsquare in $\f_q^*$. However, when $q$ is odd, we will not take advantage of the normalization of $a$ since there is not much to be gained.

\begin{thm}\label{T3.3}
Let $q$ be even and 
\[
f(X)=X+\frac 1{X-r}+\frac 1{X-r^q}\in\f_q(X),
\]
where $r\in\f_{q^2}\setminus\f_q$. Then $f$ is a PR of $\Bbb P^1(\f_q)$ if and only if $r+r^q=1$.
\end{thm}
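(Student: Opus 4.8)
The plan is to reduce everything to Hermite's criterion and then argue the two implications separately. Write $g(X)=(X-r)(X-r^q)=X^2+cX+N\in\f_q[X]$, where $c=r+r^q\in\f_q^*$ (nonzero since $r\notin\f_q$) and $N=rr^q$; then $f(X)=X+c/g(X)$, $f(\infty)=\infty$, and $f$ has no pole in $\f_q$ because $g$ is irreducible over $\f_q$. Hence $f$ is a PR of $\pf$ iff $f$ permutes $\f_q$ iff \eqref{2.1} holds, and by \eqref{2.4} the whole question is about the power sums $S_s=\sum_{x\in\f_q}f(x)^s$.

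For ``$f$ a PR $\Rightarrow c=1$'' I would compute $S_s$ for small $s$. By the Frobenius identity $f(X)^{2^k}=X^{2^k}+c^{2^k}g(X)^{-2^k}$, $S_s$ vanishes whenever $s$ is a power of $2$ (each summand does), so the first informative case is $s=3$. Expanding $f(X)^3=X^3+cX^2/g(X)+c^2X/g(X)^2+c^3/g(X)^3$ (characteristic $2$), taking partial fractions of the three proper fractions and invoking \eqref{1.1} (all pole orders are $\le 3\le q$, all polynomial parts have degree $<q-1$), a short calculation gives
\[
S_3=\varepsilon+c+c^{-1},\qquad \varepsilon:=\sum_{x\in\f_q}x^3=\begin{cases}0,&q>4,\\-1,&q=4\end{cases}
\]
($q=2$ is vacuous, since there $r+r^q=1$ automatically). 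Hermite demands $S_3=0$ if $q>4$ and $S_3=-1$ if $q=4$, and in either case this says $c+c^{-1}=0$, i.e.\ $c=1$. The only real chore here is keeping the partial fractions straight.

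For ``$c=1\Rightarrow f$ a PR'' the key point is a hidden symmetry. When $c=1$, $g(X+1)=(X+1)^2+(X+1)+N=X^2+X+N=g(X)$, so $f(X+1)=f(X)+1$; thus $f$ commutes with the involution $x\mapsto x+1$ of $\f_q$. Its orbits are exactly the fibers of $\wp(x):=x^2+x$, whose image is $\ker(\mathrm{Tr}_{\f_q/\f_2})$, so $f$ induces a self-map $\hat f$ of $\ker(\mathrm{Tr})$ with $\wp\circ f=\hat f\circ\wp$, and $f$ permutes $\f_q$ iff $\hat f$ permutes $\ker(\mathrm{Tr})$. Computing $\wp(f(x))$ gives, for $w\in\ker(\mathrm{Tr})$,
\[
\hat f(w)=w+\wp\!\Bigl(\frac1{w+N}\Bigr)=w+\frac1{w+N}+\frac1{(w+N)^2},
\]
which is well defined since $\mathrm{Tr}(N)=1\ne0=\mathrm{Tr}(w)$ forces $w+N\ne0$.

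The last step is injectivity of $\hat f$. If $\hat f(w)=\hat f(w')$, set $g_0=w+N,\ g_0'=w'+N\in\f_q$, both of trace $1$; the equation rearranges to $g_0+g_0'=\wp(1/g_0+1/g_0')$, so with $\eta:=1/g_0+1/g_0'$ either $\eta=0$ (then $g_0=g_0'$, $w=w'$) or $\eta\ne0$, in which case $g_0+g_0'=\eta^2+\eta$, $g_0g_0'=\eta+1$, so $g_0^2=(\eta^2+\eta)g_0+(\eta+1)$. Using this relation,
\[
\wp\!\Bigl(\frac{g_0+1}{\eta}\Bigr)=\frac{(g_0+1)^2+(g_0+1)\eta}{\eta^2}=\frac{\eta^2g_0}{\eta^2}=g_0,
\]
so $g_0\in\mathrm{im}\,\wp$ and $\mathrm{Tr}(g_0)=0$, contradicting $\mathrm{Tr}(g_0)=1$. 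Hence $w=w'$, $\hat f$ is injective, and (being a self-map of a finite set) bijective, so $f$ permutes $\f_q$. I expect the main obstacle of the whole proof to be finding this ``if'' argument: with the translation symmetry and the identity $\wp((g_0+1)/\eta)=g_0$ in place it is immediate, but attacking injectivity of $f$ directly produces an awkward system of polynomial conditions.
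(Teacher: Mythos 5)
Your proof is correct. The ``only if'' half is essentially the paper's argument: both compute $\sum_{x\in\f_q}f(x)^3$ via partial fractions and Carlitz's formula \eqref{1.1}; your $S_3=\varepsilon+c+c^{-1}$ (with $c=r+r^q$) agrees with the paper's $(1+r_1+r_2)^2/(r_1+r_2)$, including the extra $\varepsilon=1$ correction at $q=4$ and the vacuity at $q=2$, and in all cases forces $c=1$. The ``if'' half is genuinely different. The paper computes $f(x+y)-f(x)$ directly, rewrites the vanishing of the numerator as $r+r^2+x+x^2=\wp\bigl((1+r+r^2+x+x^2)/y\bigr)$ with $\wp(t)=t^2+t$, and contradicts $\mathrm{Tr}_{q/2}(r+r^2)=1$ (which follows from the irreducibility of $X^2+X+(r+r^2)$). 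You instead exploit the symmetry $f(X+1)=f(X)+1$ to descend $f$ along $\wp$ to a self-map $\hat f(w)=w+\wp(1/(w+N))$ of $\ker(\mathrm{Tr}_{q/2})$, reduce the permutation property of $f$ to injectivity of $\hat f$, and settle the latter with the identity $\wp\bigl((g_0+1)/\eta\bigr)=g_0$, which contradicts $\mathrm{Tr}_{q/2}(g_0)=1$; I checked the algebra ($g_0+g_0'=\eta^2+\eta$, $g_0g_0'=\eta+1$, hence $(g_0+1)^2+(g_0+1)\eta=\eta^2g_0$) and it is right. Both routes ultimately rest on the same fact $\mathrm{Tr}_{q/2}(rr^q)=\mathrm{Tr}_{q/2}(r+r^2)=1$; the paper's verification is shorter and purely computational, while yours is more structural (it isolates the translation-equivariance responsible for the permutation property) at the cost of the small extra lemma that $f$ permutes $\f_q$ iff $\hat f$ permutes $\ker(\mathrm{Tr}_{q/2})$, which you correctly justify via the free action of $x\mapsto x+1$ on the fibers of $\wp$.
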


\begin{proof}
($\Rightarrow$) We know that $f$ permutes $\f_q$. Let $r_1=r$ and $r_2=r^q$. We have
\begin{align*}
f(X)^3=\,&X^3+r_1+r_2+\frac{1+r_1^2}{X-r_1}+\frac{1+r_2^2}{X-r_2}+\frac{1+r_1r_2+r_1^2}{r_1+r_2}\frac 1{(X-r_1)^2}\cr
&+\frac{1+r_1r_2+r_2^2}{r_1+r_2}\frac 1{(X-r_2)^2}+\frac 1{(X-r_1)^3}+\frac 1{(X-r_2)^3}.
\end{align*}
By \eqref{2.4}, for $q>4$,
\begin{align*}
\sum_{x\in\f_q}f(x)^3=\,&(1+r_1^2)\frac1{r_2-r_1}+(1+r_2^2)\frac 1{r_1-r_2}+\frac{1+r_1r_2+r_1^2}{r_1+r_2}\frac 1{(r_2-r_1)^2}\cr
&+\frac{1+r_1r_2+r_2^2}{r_1+r_2}\frac 1{(r_1-r_2)^2}+\frac 1{(r_2-r_1)^3}+\frac 1{(r_1-r_2)^3}\cr
=\,&\frac{(1+r_1+r_2)^2}{r_1+r_2}.
\end{align*}
It follows from \eqref{2.1} that $r_1+r_2=1$. If $q=4$, the above computation becomes
\[
\sum_{x\in\f_q}f(x)^3=1+\frac{(1+r_1+r_2)^2}{r_1+r_2},
\]
and by \eqref{2.1}, we also have $r_1+r_2=1$. If $q=2$, obviously $r_1+r_2=1$.

\medskip
($\Leftarrow$) It suffices to show that $f$ permutes $\f_q$. Assume to the contrary that there exist $x\in\f_q$ and $y\in\f_q^*$ such that $f(x+y)-f(x)=0$.
We have
\begin{align*}
&f(x+y)-f(x)=\cr
&\frac{y\bigl[(1+r+r^2+x+x^2)^2+(1+r+r^2+x+x^2)y+(r+r^2+x+x^2)y^2 \bigr]}{(x+r)(x+r+1)(x+y+r)(x+y+r+1)}.
\end{align*}
Thus
\[
(1+r+r^2+x+x^2)^2+(1+r+r^2+x+x^2)y+(r+r^2+x+x^2)y^2 =0,
\]
i.e.,
\[
r+r^2+x+x^2=\frac{1+r+r^2+x+x^2}{y}+\Bigl(\frac{1+r+r^2+x+x^2}{y} \Bigr)^2.
\]
It follows that
\[
0=\text{Tr}_{q/2}(r+r^2+x+x^2)=\text{Tr}_{q/2}(r+r^2).
\]
However, since $X^2+X+(r+r^2)\in \f_q[X]$ has a root $r\notin\f_q$, it is irreducible over $\f_q$. Thus $\text{Tr}_{q/2}(r+r^2)=1$, which is a contradiction. 
\end{proof}

\begin{rmk}\label{R3.3}\rm
In fact, all PRs in Theorem~\ref{T3.3} are equivalent. Let $f_r(X)$ denote the PR in Theorem~\ref{T3.3}, where $r+r^q=1$. If $r_1,r_2\in\f_{q^2}$ are such that $r_1+r_1^q=1=r_2+r_2^q$, then $u:=r_1-r_2\in\f_q$. We have
\begin{align*}
f_{r_1}(X+u)\,&=X+u+\frac 1{X+u-r_1}+\frac 1{X+u-r_1^q}=X+u+\frac 1{X-r_2}+\frac 1{X-r_2^q}\cr
&=f_{r_2}(X)+u.
\end{align*}
\end{rmk}

\begin{thm}\label{T3.4}
Let $q$ be odd and 
\[
f(X)=aX+\frac 1{X-r}+\frac 1{X+r}\in\f_q(X),
\]
where $a\in\f_q^*$, $r\in\f_{q^2}\setminus\f_q$, $r^2\in\f_q$. Then $f$ is a PR of $\Bbb P^1(\f_q)$ if and only if $a=-1/4r^2$.
\end{thm}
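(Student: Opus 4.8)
The plan is to reduce, as in Lemma~\ref{L3.2} and Theorem~\ref{T3.3}, to a statement about permuting $\f_q$ and then to apply Hermite's criterion through the power‑sum formula \eqref{2.4}. Since $f$ fixes $\infty$, it is a PR of $\pf$ if and only if it permutes $\f_q$. The first thing I would record is that, because $r\in\f_{q^2}\setminus\f_q$ and $r^2\in\f_q$, the polynomial $X^2-r^2$ is the (irreducible) minimal polynomial of $r$ over $\f_q$; hence $r^q=-r$ and, crucially, $r^2$ is a nonsquare in $\f_q$. It is also convenient to write $f(X)=aX+2X/(X^2-r^2)$.

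For necessity I would use \eqref{2.1} with $s=2$. First I would compute the partial fraction decomposition of $f(X)^2$: writing $X/(X\mp r)=1\pm r/(X\mp r)$ and $2/(X^2-r^2)=(1/r)(1/(X-r)-1/(X+r))$, one finds that $f(X)^2$ has polynomial part $a^2X^2+4a$ of degree $2$ and, apart from that, only a simple pole and a double pole at each of $\pm r$. Provided $q\ge 5$, formula \eqref{2.4} applies, and using $r^q-r=-2r$ and $(-r)^q-(-r)=2r$ the resulting sum collapses to $\sum_{x\in\f_q}f(x)^2=-2a-1/(2r^2)$. Since this must vanish, $a=-1/(4r^2)$. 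The case $q=3$ I would handle separately by the same computation via \eqref{2.3} in place of \eqref{2.4} --- the polynomial part now contributes an extra term $-a^2$ and the target value is $-1$ because $s=q-1$ --- or simply by evaluating $f$ at the three points of $\f_3$.

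For sufficiency, assuming $a=-1/(4r^2)$, I would follow the template of the converse in Theorem~\ref{T3.3}: suppose $f(x+y)=f(x)$ for some $x\in\f_q$ and $y\in\f_q^*$ and compute
\[
f(x+y)-f(x)=y\left(a-\frac{2(x^2+xy+r^2)}{(x^2-r^2)((x+y)^2-r^2)}\right),
\]
where the denominators are nonzero because $\pm r\notin\f_q$. Since $y\ne 0$, the parenthesized factor vanishes, and substituting $a=-1/(4r^2)$ turns this into
\[
(x^2-r^2)((x+y)^2-r^2)+8r^2(x^2+xy+r^2)=0.
\]
The key observation is that the left‑hand side factors as $(x^2+xy+3r^2)^2-r^2y^2$, so the relation becomes $(x^2+xy+3r^2)^2=r^2y^2$. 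But the left side is a square in $\f_q$, while the right side is the product of the nonsquare $r^2$ and the nonzero square $y^2$, hence a nonsquare in $\f_q$ --- a contradiction. Therefore $f$ is injective on $\f_q$, and being an injective self‑map of the finite set $\f_q$ it permutes $\f_q$, hence $\pf$.

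I expect the power‑sum computation yielding $\sum_{x\in\f_q}f(x)^2=-2a-1/(2r^2)$ to be the most tedious bookkeeping, but the genuine idea --- and the step I would most want to double‑check --- is the factorization $(x^2-r^2)((x+y)^2-r^2)+8r^2(x^2+xy+r^2)=(x^2+xy+3r^2)^2-r^2y^2$, since it is this identity that reduces the absence of a collision to the simple fact that $r^2$ is a nonsquare in $\f_q$. I would also be careful not to let the small case $q=3$ slip through, since \eqref{2.4} is not available there.
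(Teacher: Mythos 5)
Your proposal is correct and follows essentially the same route as the paper: necessity via $\sum_{x\in\f_q}f(x)^2$ computed through \eqref{2.4} (with the $q=3$ case patched by \eqref{2.3}), and sufficiency by showing a collision $f(x+y)=f(x)$ forces $(x^2+xy+3r^2)^2=r^2y^2$, which is exactly the paper's factorization $(3r^2+x^2+xy-ry)(3r^2+x^2+xy+ry)=0$ read over $\f_q$. Your phrasing of the final contradiction via $r^2$ being a nonsquare is a trivially equivalent restatement of the paper's observation that $\pm ry\in\f_q$ forces $y=0$.
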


\begin{proof}
($\Rightarrow$) We have
\[
f(X)^2=a^2X^2+4a+\frac{2ar^2+1}r\frac 1{X-r}-\frac{2ar^2+1}r\frac 1{X+r}+\frac 1{(X-r)^2}+\frac 1{(X+r)^2}.
\]
By \eqref{2.4}, for $q>3$, 
\begin{align*}
\sum_{x\in\f_q}f(x)^2=\,&\frac{2ar^2+1}r \frac1{-r-r}-\frac{2ar^2+1}r \frac1{r-(-r)}+\frac 1{(-r-r)^2}+\frac 1{(r-(-r))^2}\cr
=\,&-\frac{1+4ar^2}{2r^2}.
\end{align*}
Therefore by \eqref{2.1}, $a=-1/4r^2$. If $q=3$, we have 
\[
\sum_{x\in\f_q}f(x)^2=-a^2-\frac{1+4ar^2}{2r^2}=-1-\frac{1+4ar^2}{2r^2},
\]
and by \eqref{2.1}, we also have $a=-1/4r^2$.

\medskip
($\Leftarrow$) Assume to the contrary that there exist $x\in\f_q$ and $y\in\f_q^*$ such that $f(x+y)-f(x)=0$. We have
\[
f(x+y)-f(x)=-\frac{y(3r^2+x^2+xy-ry)(3r^2+x^2+xy+ry)}{4r^2(x-r)(x+r)(x+y-r)(x+y+r)}.
\]
It follows that $3r^2+x^2+xy=\pm ry$. Since $r\in\f_{q^2}\setminus\f_q$ and $r^2\in\f_q$, we must have $y=0$, which is a contradiction.
\end{proof}

\begin{rmk}\label{R3.5}\rm
All PRs in Theorem~\ref{T3.4} are equivalent. Let $f_r(X)$ denote the PR in Theorem~\ref{T3.4}, where $r^2$ is a nonsquare of $\f_q$ and $a=-1/4r^2$. If $r_1,r_2\in\f_{q^2}$ are such that $r_1^2$ and $r_2^2$ are nonsquares of $\f_q$, then $u:=r_1/r_2\in\f_q^*$ and $f_{r_1}(uX)=u^{-1}f_{r_2}(X)$. When $\text{char}\,\f_q=3$, $f_r(X)=-X^3/(r^2(X^2-r^2))$, which is actually equivalent to the polynomial $r^2X^3-X$.
\end{rmk}


\section{PRs of Degree $4$}

Let $f$ be a PR of $\Bbb P^1(\f_q)$ of degree $4$ which is not equivalent to a polynomial. Then $f$ is either equivalent to 
\begin{equation}\label{3.4}
\frac{P(X)}{(X-r)(X-r^q)},
\end{equation}
where $P(X)\in\f_q[X]$, $\deg P=4$, $r\in\f_{q^2}\setminus\f_q$ and $P(r)\ne 0$, or equivalent to
\begin{equation}\label{3.5}
\frac{Q(X)}{(X-r)(X-r^q)(X-r^{q^2})},
\end{equation}
where $Q(X)\in\f_q[X]$, $\deg Q=4$, $r\in\f_{q^3}\setminus\f_q$ and $Q(r)\ne 0$. Note that \eqref{3.4} is further equivalent to 
\begin{equation}\label{3.6}
aX^2+bX+\frac c{X-r}+\frac{c^q}{X-r^q},
\end{equation}
where $a\in\f_q^*$, $b\in\f_q$, $c\in\f_{q^2}^*$, and \eqref{3.5} is further equivalent to 
\begin{equation}\label{3.7}
aX+\frac b{X-r}+\frac{b^q}{X-r^q}+\frac{b^{q^2}}{X-r^{q^2}},
\end{equation}
where $a\in\f_q^*$ and $b\in\f_{q^3}^*$. 

Let $f$ be of the form \eqref{3.7} or \eqref{3.8}. We will compute the power sums $\sum_{x\in\f_q}f(x)^k$ for various values of $k$ (in one case, for $k$ up to $11$). The method has already been illustrated in the proofs of Theorems~\ref{T3.3} and \ref{T3.4}: first find the partial fraction decomposition of $f(X)^k$ and then use formula \eqref{2.4}. The intermediate steps of the computations are rather involved and they are usually omitted. Readers should have no difficulty to verify the results with computer assistance.

\subsection{PRs of the form \eqref{3.6}}\

\medskip

Let $f$ be a PR of $\pf$ of the form \eqref{3.6}. We assume that $q>3$. It follows from $\sum_{x\in\f_q}f(x)=0$ that $c\in\f_q^*$; see the proof of Lemma~\ref{L3.2}. Hence we may assume that $c=1$. If $q$ is odd, we may further assume that $r+r^q=0$. If $q$ is even, by a suitable substitution $X\mapsto uX$, $u\in\f_q^*$, we may assume that $b=0$ or $1$.

\begin{thm}\label{T3.5}
Let $q=2^n$, $n\ge 5$, and let 
\[
f(X)=aX^2+bX+\frac 1{X-r}+\frac 1{X-r^q},
\]
where $a\in\f_q^*$, $b\in\f_2$, $r\in\f_{q^2}\setminus\f_q$. Then $f$ is not a PR of $\Bbb P^1(\f_q)$.
\end{thm}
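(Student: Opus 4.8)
The plan is to argue by contradiction through Hermite's criterion \eqref{2.1}, exactly in the spirit of the proofs of Theorems~\ref{T3.3} and \ref{T3.4}. Suppose $f$ is a PR of $\pf$. Since $f(\infty)=\infty$, $f$ permutes $\f_q$, so $\sum_{x\in\f_q}f(x)^k=0$ for all $1\le k\le q-2$. Put $s=r+r^q\in\f_q$ and $N=r^{1+q}\in\f_q$; then $D(X):=(X-r)(X-r^q)=X^2+sX+N$ has the root $r\notin\f_q$ and is therefore irreducible over $\f_q$, which (as $q$ is even) forces $s\ne0$ and $\text{Tr}_{q/2}(N/s^2)=1$. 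In characteristic~$2$ we have $\frac1{X-r}+\frac1{X-r^q}=s/D(X)$, so $f(X)=aX^2+bX+s/D(X)$. Because $n\ge5$, for every $k\le11$ the polynomial part of $f(X)^k$ has degree $2k\le22<q-1$ and every pole of $f(X)^k$ has order $\le k<q$, so \eqref{2.3}--\eqref{2.4} apply with no correction term; moreover $r^q-r=r+r^q=s$, and the partial fraction data at the pole $r^q$ are the Frobenius conjugates of those at $r$, so
\[
\sum_{x\in\f_q}f(x)^k=\sum_{j\ge1}\frac{\text{Tr}_{q^2/q}\bigl(c_{k,j}\bigr)}{s^{j}},
\]
where $c_{k,j}\in\f_{q^2}$ is the coefficient of $(X-r)^{-j}$ in the partial fraction decomposition of $f(X)^k$.

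Only odd $k$ need to be considered, since $\sum_xf(x)^{2k}=\bigl(\sum_xf(x)^k\bigr)^2$ in characteristic~$2$. I would compute $\sum_{x\in\f_q}f(x)^k$ for $k=3,5,7,9,11$, obtaining $f(X)^k$ by multiplying the ``Freshman's dream'' powers $f(X)^{2^i}=a^{2^i}X^{2^{i+1}}+b^{2^i}X^{2^i}+(X-r)^{-2^i}+(X-r^q)^{-2^i}$ and extracting partial fractions. A short calculation gives $\sum_xf(x)^3=a^2s^3+b^2s+a+b/s$, so the vanishing of this sum reads
\[
E_3:\qquad a^2s^4+as+bs^2+b=0
\]
(using $b^2=b$); one checks that $E_5$ and $E_9$ are likewise free of $N$, whereas $E_7$ and $E_{11}$ are the first power sums into which the norm $N$ enters (through symmetric functions of $r$ and $r^q$, such as $r^3+r^{3q}=s^3+Ns$). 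Now split into the two normalized cases. If $b=0$, then $E_3$ forces $a=s^{-3}$; substituting $a=s^{-3}$, $b=0$ into $E_7$ and $E_{11}$ yields two polynomial relations between $s$ and $N$ alone. If $b=1$, then $E_3$ (together with $E_5$, if needed) expresses $a$ algebraically in terms of $s$, and again $E_7$ and $E_{11}$ produce two relations in $s$ and $N$.

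The crux, and the main obstacle, is this final elimination. Carrying out the partial fraction expansions of $f(X)^7,\dots,f(X)^{11}$ is heavy and is where computer assistance becomes indispensable, and afterwards one must show that the over-determined system consisting of $E_3,E_7,E_{11}$ together with the trace condition $\text{Tr}_{q/2}(N/s^2)=1$ has no solution with $a\ne0$ and $s\ne0$ when $q=2^n$ and $n\ge5$. I expect the resolution to come from eliminating $a$ via $E_3$ and then $N$ from $E_7$ and $E_{11}$, leaving a single polynomial equation in $s$ over $\f_2$ whose roots either do not lie in $\f_q$ for $n\ge5$ or force $s$ (hence $r$) into a subfield so small that $X^2+sX+N$ cannot be irreducible --- in either case a contradiction, which proves that $f$ is not a PR of $\pf$. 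The hypothesis $n\ge5$ is essential here: for small $n$ the same power-sum system does admit sporadic solutions, so the argument genuinely relies on the degree bound $2k<q-1$ being available all the way up to $k=11$.
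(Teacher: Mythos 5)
Your overall strategy --- Hermite's criterion plus the Carlitz formula \eqref{2.4} applied to $\sum_{x}f(x)^k$ for odd $k$ up to $11$, with $n\ge5$ guaranteeing $2k<q-1$ and $k\le q$ --- is exactly the paper's, and your $E_3$ agrees with the paper's factorization $\sum_{x}f(x)^3=\frac1s(1+as)(1+s^2+as^3)$, where $s=r+r^q$. But the proposal stops at a plan precisely where the proof has to be delivered: $E_7$ and $E_{11}$ are never computed, the final elimination is only conjectured to work, and the mechanism you anticipate (a polynomial in $s$ over $\f_2$ with no roots in $\f_q$ for $n\ge5$) is not what actually happens. In one subcase the pure power-sum elimination cannot close the argument at all.

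Concretely, for $b=1$ the equation $E_3$ factors as $(1+as)(1+s^2+as^3)=0$, giving two subcases. In the subcase $a=s^{-1}$ the factor $1+as$ vanishes, and it also divides $\sum_{x}f(x)^5$ and $\sum_{x}f(x)^7$ (the paper records this explicitly), so $E_3$, $E_5$, $E_7$ are all identically satisfied and your elimination has nothing to bite on; the paper instead exhibits the explicit collision $f(r+r^q)=f(0)$, a separate non-power-sum argument that your scheme would still need to find. In the other subcase $a=s^{-1}+s^{-3}$, the contradiction is that $E_7$ reduces to $(1+s)^2=0$, forcing $s=1$ and hence $a=s^{-1}+s^{-3}=0$ --- not a root-in-a-subfield obstruction. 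For $b=0$ the outcome of the elimination is also different from what you predict: after normalizing to $s=1$, all power sums up to $k=10$ vanish identically and $\sum_{x}f(x)^{11}$ equals the constant $1$, with no dependence on $N=r^{1+q}$ surviving, so there is no ``relation in $s$ and $N$'' to exploit, just a flat nonzero value. The skeleton is right, but the decisive steps --- the factorization of $\sum f^7$, the value of $\sum f^{11}$, and the injectivity argument for $a=s^{-1}$ --- are missing and are not supplied by the elimination scheme as described.
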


\begin{proof} 
Assume to the contrary that $f$ is a PR of $\Bbb P^1(\f_q)$. Let $r_1=r$ and $r_2=r^q$.

\medskip
{\bf Case 1.} Assume that $b=1$. We have
\begin{align*}
&f(X)^3=\cr
&a^3X^6+a^2X^5+aX^4+X^3+a^2(r_1+r_2)X^2+a^2(r_1+r_2)^2X+r_1+r_2\cr
&+a^2(r_1^3+r_2^3)+\frac {1+r_1^2+a^2r_1^4}{X-r_1}+\frac {1+r_2^2+a^2r_2^4}{X-r_2}+\frac{1+r_1r_2+r_1^2+ar_1^2r_2+ar_1^3}{(r_1+r_2)(X-r_1)^2}\cr
&+\frac{1+r_1r_2+r_2^2+ar_1r_2^2+ar_2^3}{(r_1+r_2)(X-r_2)^2}+\frac1{(X-r_1)^3}+\frac1{(X-r_2)^3}.
\end{align*}
By \eqref{2.4}, we find that 
\begin{align*}
\sum_{x\in\f_q}f(x)^3=\,&\frac {1+r_1^2+a^2r_1^4}{r_2-r_1}+\frac {1+r_2^2+a^2r_2^4}{r_1-r_2}+\frac{1+r_1r_2+r_1^2+ar_1^2r_2+ar_1^3}{(r_1+r_2)(r_2-r_1)^2}\cr
&+\frac{1+r_1r_2+r_2^2+ar_1r_2^2+ar_2^3}{(r_1+r_2)(r_1-r_2)^2}+\frac1{(r_2-r_1)^3}+\frac1{(r_1-r_2)^3}\cr
=\,&\frac 1{r_1+r_2}\bigl[1+a(r_1+r_2)\bigr]\bigl[1+(r_1+r_2)^2+a(r_1+r_2)^3\bigr].
\end{align*}
Hence
\begin{equation}\label{3.8}
a=(r_1+r_2)^{-1}\quad \text{or}\quad (r_1+r_2)^{-1}+(r_1+r_2)^{-3}.
\end{equation}
In the same way, we find that
\[
\sum_{x\in\f_q}f(x)^7=\frac{(1+a(r_1+r_2))h}{(r_1+r_2)^3},
\]
where 
\begin{align*}
h=\,& a^5 (r_1^{13}+r_1^{12}
r_2+r_1 r_2^{12}+r_2^{13})+a^4
(r_1^{12}+r_1^{10}+r_2^{12}+r_2^{10})\cr
&+a^3
(r_1^9 r_2^2+r_1^9+r_1^8 r_2^3+r_1^7
r_2^4+r_1^7 r_2^2+r_1^6 r_2^5+r_1^6
r_2^3+r_1^5 r_2^6+r_1^5 r_2^4+r_1^4
r_2^7\cr
&+r_1^4 r_2^5+r_1^3 r_2^8+r_1^3
r_2^6+r_1^2 r_2^9+r_1^2 r_2^7+r_2^9)\cr
&+a^2
(r_1^8 r_2^2+r_1^8+r_1^7 r_2+r_1^6
r_2^4+r_1^5 r_2^3+r_1^4 r_2^6+r_1^3
r_2^5+r_1^2 r_2^8+r_1^2 r_2^2+r_1
r_2^7+r_2^8)\cr
&+a (r_1^9+r_1^8
r_2+r_1^7 r_2^2+r_1^7+r_1^6 r_2^3+r_1^4
r_2^3+r_1^3 r_2^6\cr
&+r_1^3 r_2^4+r_1^2
r_2^7+r_1^2 r_2+r_1 r_2^8+r_1
r_2^2+r_2^9+r_2^7)\cr
&+r_1^8+r_1^6
r_2^2+r_1^5 r_2+r_1^4 r_2^2+r_1^2
r_2^6+r_1^2 r_2^4+r_1^2+r_1
r_2^5+r_1 r_2+r_2^8+r_2^2
\end{align*}
If in \eqref{3.8}, $a=(r_1+r_2)^{-1}+(r_1+r_2)^{-3}$, we find that 
\[
h=(1+r_1+r_2)^2.
\]
Then $r_1+r_2=1$ and hence $a=0$, which is a contradiction. Thus we have $a=(r_1+r_2)^{-1}$. However, this creates another contradiction: $f(r_1+r_2)=f(0)$.

\medskip

{\bf Note.} The sum $\sum_{x\in\f_q}f(x)^5$ is not useful since our computation shows that it is a multiple of $\sum_{x\in\f_q}f(x)^3$. We find that 
\begin{align*}
\sum_{x\in\f_q}f(x)^5=\,&\frac 1{(r_1+r_2)^3}\bigl[1+a(r_1+r_2)\bigr]\bigl[1+(r_1+r_2)^2+a(r_1+r_2)^3\bigr]\cr
&\cdot\bigl[1+(r_1+r_2)^2+(r_1+r_2)^4+a(r_1+r_2)^3+a^2(r_1+r_2)^6\bigr].
\end{align*}

\medskip
{\bf Case 2.} Assume that $b=0$. The proof is similar to that of Case 1. We have
\begin{equation}\label{3.9}
\sum_{x\in\f_q}f(x)^3=a\bigl[1+a(r_1+r_2)^3\bigr].
\end{equation}
Hence $a=(r_1+r_2)^{-3}$. Then $f(X)$ is equivalent to 
\[
X^2+\frac 1{X-r_1'}+\frac 1{X-r_2'},
\]
where $r_1'=r_1/(r_1+r_2)$ and $r_2'=r_2/(r_1+r_2)$. Thus, writing $r_1'$ and $r_2'$ as $r_1$ and $r_2$, we may assume that 
\begin{equation}\label{f}
f(X)=X^2+\frac1{X-r_1}+\frac1{X-r_2},
\end{equation}
where $r_1+r_2=1$. Under this assumption, we find that $\sum_{x\in\f_q}f(x)^k=0$ for $1\le k\le 10$. One might think that $\sum_{x\in\f_q}f(x)^k=0$ for all $1\le k\le q-2$. Surprisingly, however, the pattern stops at $k=11$. We find that 
\[
\sum_{x\in\f_q}f(x)^{11}=1,
\]
which is a contradiction.
\end{proof}

\noindent{\bf Remark.} There is another way to see that the rational function in \eqref{f} cannot be a PR of $\Bbb P^1(\f_q)$ when $q$ is not too small. Write
\[
X^2+\frac1{X-r_1}+\frac1{X-r_2}=X^2+\frac 1{X^2+X+c}=:g(X),
\]
where $c=r_1r_2$ and $\text{Tr}_{q/2}(c)=1$. Assume to the contrary that $g(X)$ is a PR of $\Bbb P^1(\f_q)$. Then for each $x\in\f_q$, the equation $g(x+y)-g(x)=0$ has no solution $y\in\f_q^*$. We have
\[
g(x+y)-g(x)=\frac{y[1+(1+c^2+x^2+x^4)y+(c+x+x^2)y^2+(c+x+x^2)y^3]}{(x^2+x+c)((x+y)^2+(x+y)+c)}.
\]
Hence the equation
\begin{equation}\label{eq-y}
1+(1+c^2+x^2+x^4)y+(c+x+x^2)y^2+(c+x+x^2)y^3=0
\end{equation}
has no solution $y\in\f_q$. The substitution $y\mapsto y+1$ transforms \eqref{eq-y} into
\begin{equation}\label{eq-y1}
y^3+\frac{1+c+c^2+x+x^4}{c+x+x^2}y+c+x+x^2=0,
\end{equation}
which still has no solution in $\f_q$. By \cite[Theorem~1]{Williams-JNT-1975}, we have 
\[
\text{Tr}_{q/2}\Bigl(1+\frac{(1+c+c^2+x+x^4)^3}{(c+x+x^2)^5}\Bigr)=0\quad\text{for all}\ x\in\f_q.
\]
Then by \cite[Theorem~3.1]{Hou-Iezzi-arXiv1906.09487}, when $q$ is not too small,
\begin{equation}\label{P}
1+\frac{(1+c+c^2+X+X^4)^3}{(c+X+X^2)^5}=P(X)+P(X)^2
\end{equation}
for some $P(X)\in\f_q(X)$. However, this is impossible since the valuation of the left side of \eqref{P} at the irreducible polynomial $c+X+X^2$ is $-5$ while the same valuation of the right side of \eqref{P} is either nonnegative or even.

\begin{thm}\label{T3.6}
Assume that $\text{\rm char}\,\f_q\ne 2,3$ and $q>7$ and let 
\[
f(X)=aX^2+bX+\frac 1{X-r}+\frac 1{X+r},
\]
where $a\in\f_q^*$, $b\in\f_q$, $r\in\f_{q^2}\setminus\f_q$, $r^2\in\f_q$. Then $f$ is not a PR of $\Bbb P^1(\f_q)$.
\end{thm}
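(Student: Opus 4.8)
The plan is to follow the same power-sum strategy used in the proofs of Theorems~\ref{T3.3}--\ref{T3.5}. We argue by contradiction: assume $f(X)=aX^2+bX+\frac 1{X-r}+\frac 1{X+r}$ is a PR of $\Bbb P^1(\f_q)$. Since $f$ fixes $\infty$, it permutes $\f_q$, so Hermite's criterion \eqref{2.1} applies. Writing $r_1=r$, $r_2=-r$ (so $r_1+r_2=0$, $r_1r_2=-r^2\in\f_q$, and $r_1^q=r_2$), the first step is to compute the partial fraction decomposition of $f(X)^2$ and apply \eqref{2.4}. The polynomial part contributes nothing to $\sum_{x\in\f_q}f(x)^2$ as long as its degree $4$ is less than $q-1$, i.e.\ $q>5$; the hypothesis $q>7$ comfortably covers this and also leaves room for the higher power sums. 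From $\sum_{x\in\f_q}f(x)^2=0$ I expect to extract a first polynomial constraint relating $a$, $b$ and $r^2$ (analogous to the relation $a=-1/4r^2$ forced in Theorem~\ref{T3.4}, though here the extra $bX$ term will produce something slightly different, likely involving $b$).

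Next I would compute $\sum_{x\in\f_q}f(x)^3$ (and, if needed, $\sum_{x\in\f_q}f(x)^k$ for a few more small $k$, up to $k=7$ or so, as the statement's preamble to Section~4 warns may be necessary) and again set each equal to $0$ by \eqref{2.1}. As in Theorem~\ref{T3.5}, I anticipate that some of these sums will factor, with one factor vanishing precisely when an earlier constraint holds, so the useful information comes from the complementary factor. Each new power sum gives another polynomial equation in $a$, $b$, $r^2$ over $\f_q$. The goal is to show that this system of equations is inconsistent over $\f_q$ (with $q>7$), or forces a degenerate situation — for instance $a=0$, or $r\in\f_q$, or a collision $f(x_1)=f(x_2)$ for distinct $x_1,x_2\in\f_q$ — each of which contradicts the standing hypotheses. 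A parallel route, mirroring the Remark after Theorem~\ref{T3.5}, is to examine the difference $f(x+y)-f(x)$ directly: this is $y$ times a cubic in $y$ whose coefficients are polynomials in $x$ and $r^2$, and one would try to argue that for a suitable choice of $x\in\f_q$ this cubic has a root $y\in\f_q^*$, again using the fact that $r^2$ being such that $X^2-r^2$ is irreducible restricts which quantities can lie in $\f_q$.

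The case distinction by the residue of $q$ modulo small primes (as in Theorem~\ref{T3.4}, where $q=3$ was handled separately, and Theorem~\ref{T3.5}, where $q=4$ and $q=2$ needed care) will likely reappear: the hypotheses $\operatorname{char}\f_q\ne 2,3$ and $q>7$ suggest that the characteristic $5$ or $7$ cases, or the smallest admissible $q$, may require either a separate short computation or an appeal to \eqref{2.3} instead of \eqref{2.4} when the degree of the polynomial part is no longer below $q-1$. I would treat the generic case $q$ large first, get the contradiction from two or three power sums, and then check the finitely many small fields (e.g.\ $q=11,13,\dots$ up to wherever the generic argument kicks in) individually by the same method, noting that these are routine with computer assistance.

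The main obstacle I foresee is bookkeeping rather than conceptual: the presence of both the quadratic term $aX^2$ and the linear term $bX$ means $f(X)^k$ has a genuinely two-parameter polynomial part, so the partial-fraction coefficients $b_i$ and exponents $k_i$ in \eqref{2.2} are messier than in the degree-$3$ case, and one must be careful that $k_i\le q$ so that \eqref{1.1} (hence \eqref{2.4}) is valid — this is where the lower bound on $q$ is really used. The hard part will be identifying which power sum (or which factor of it) yields a constraint that, combined with the $\sum f(x)^2=0$ relation, is genuinely unsatisfiable; it is conceivable that $\sum f(x)^2=0$ and $\sum f(x)^3=0$ alone are consistent (defining some curve in the $(a,b,r^2)$ parameters) and one must go up to $k=5$ or beyond — exactly the phenomenon flagged in the remark before this theorem — to close the argument.
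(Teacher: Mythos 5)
Your plan is exactly the paper's proof: one computes $\sum_{x\in\f_q}f(x)^2=-\frac{1+4br^2}{2r^2}$, so Hermite's criterion forces $b=-1/4r^2$, and under that constraint $\sum_{x\in\f_q}f(x)^3=-6a\ne 0$ (using $\text{char}\,\f_q\ne 2,3$ and $a\ne 0$) already yields the contradiction — the hypothesis $q>7$ is there precisely so that the degree-$6$ polynomial part of $f(X)^3$ contributes nothing and \eqref{2.4} applies. No higher power sums and no separate small-$q$ cases are needed; the only thing missing from your write-up is the mechanical partial-fraction computation itself.
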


\begin{proof}
Assume to the contrary that $f(X)$ is a PR of $\Bbb P^1(\f_q)$. Let $r_1=r$ and $r_2=r^q$. We have 
\begin{align*}
f(X)^2=\,&2(2 b + a r_1 + a r_2) + 4 a X + b^2 X^2 + 2 a b X^3 + a^2 X^4\cr
& +
\frac{2 (1 + b r_1^2 + a r_1^3 - b r_1 r_2 - a r_1^2 r_2)}{(r_1 - r_2) (X-r_1)}+
\frac{2 (1 + b r_2^2 + a r_2^3 - b r_1 r_2 - a r_2^2 r_1)}{(r_2 - r_1) (X-r_2)}\cr
&+\frac 1{(X-r_1)^2}+\frac 1{(X-r_2)^2}.
\end{align*}
By \eqref{2.4},
\begin{align}\label{3.10}
\sum_{x\in\f_q}f(x)^2=\,& \frac{2 (1 + b r_1^2 + a r_1^3 - b r_1 r_2 - a r_1^2 r_2)}{(r_1 - r_2)(r_2-r_1)}\\
&+
\frac{2 (1 + b r_2^2 + a r_2^3 - b r_1 r_2 - a r_2^2 r_1)}{(r_2 - r_1)(r_1-r_2)}\cr
&+\frac1{(r_2-r_1)^2} +\frac1{(r_1-r_2)^2}\cr
=\,&-\frac{1+4br^2}{2r^2}.\nonumber
\end{align}
Thus $b=-1/4r^2$. Under this condition, we find that
\[
\sum_{x\in\f_q}f(x)^3=-6a\ne 0,
\]
which is a contradiction.
\end{proof}

\begin{thm}\label{T3.7}
Let $q=3^n$, $n\ge 3$, and let
\[
f(X)=aX^2+bX+\frac 1{X-r}+\frac 1{X+r},
\]
where $a\in\f_q^*$, $b\in\f_q$, $r\in\f_{q^2}\setminus\f_q$, $r^2\in\f_q$. Then $f$ is not a PR of $\Bbb P^1(\f_q)$.
\end{thm}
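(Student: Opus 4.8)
The plan is to follow the same strategy used in the proof of Theorem~\ref{T3.5}, Case~2, adapted to characteristic~$3$. First I would exploit the normalizations available here: since $q>3$ and $f$ fixes $\infty$, the hypothesis $\sum_{x\in\f_q}f(x)=0$ forces $c\in\f_q^*$, so we take $c=1$; with $r^2\in\f_q$ and $r\in\f_{q^2}\setminus\f_q$ we keep $r_1=r$, $r_2=-r$. The substitution $X\mapsto X-\beta$ for a suitable $\beta\in\f_q$ can be used to kill the linear coefficient after expanding $a(X-\beta)^2+b(X-\beta)$, so one may assume $b=0$ (the $X^2$ term is unaffected up to an additive constant, which is irrelevant under equivalence). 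Then, as in Remark~\ref{R3.3} / the reductions preceding \eqref{f}, compute $\sum_{x\in\f_q}f(x)^3$: the partial fraction decomposition of $f(X)^3$ has all pole orders $\le 3<q$ and polynomial part of degree $6<q-1$ (using $n\ge 3$, so $q\ge 27$), so \eqref{2.4} applies cleanly. I expect this to yield an expression of the shape $\sum_{x\in\f_q}f(x)^3 = a\,[\,1 + a\cdot(\text{monomial in }r^2)\,]$ analogous to \eqref{3.9}, which by \eqref{2.1} pins down $a$ uniquely in terms of $r^2$ (up to the two-element ambiguity one sometimes gets from a quadratic factor).

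Having fixed $a$ by the cubic power sum, the next step is to renormalize once more (a substitution $X\mapsto uX$, $u\in\f_q^*$, rescaling $r$ and $a$) so that $f$ takes a one-parameter normal form, say $f(X)=X^2+\frac1{X-r}+\frac1{X+r}$ with a single constraint relating $r^2$ to a fixed nonsquare, exactly mirroring the passage to \eqref{f}. At this point one computes the higher power sums $\sum_{x\in\f_q}f(x)^k$ for $k=1,2,\dots$ with the aid of a computer, expecting them to vanish for a stretch of values of $k$ (consistent with \eqref{2.1}) and then to fail at some small $k_0$; producing that single nonzero value $\sum_{x\in\f_q}f(x)^{k_0}\ne0$ contradicts Hermite's criterion and finishes the proof. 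The condition $n\ge 3$ is precisely what guarantees that all the relevant $k$ satisfy $k\le q$ and that the polynomial parts have degree $<q-1$, so that \eqref{2.4} remains valid throughout and no sporadic small-$q$ exceptions intrude.

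The main obstacle is the same one flagged in the discussion after Theorem~\ref{T3.5}: it is not a priori clear at which $k_0$ the power sums first become nonzero, nor that this $k_0$ is uniformly bounded independent of $q$. For the even case the answer turned out to be the surprisingly large $k_0=11$; in characteristic~$3$ one must determine the analogous threshold by explicit computation and verify it does not depend on $n$ once $n\ge 3$. A secondary subtlety is that the cubic power sum may only determine $a$ up to a root of a quadratic (as happened in Case~1 of Theorem~\ref{T3.5} via \eqref{3.8}), in which case each candidate value of $a$ must be ruled out separately — either by a further power sum or, in the degenerate branch, by exhibiting a collision $f(\gamma_1)=f(\gamma_2)$ for distinct $\gamma_1,\gamma_2\in\f_q$ directly, as was done there. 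Alternatively, one could attempt the self-contained argument sketched in the Remark after Theorem~\ref{T3.5}: write $f(X)=aX^2+\frac{\alpha X+\beta}{X^2-r^2}$, analyze $f(x+y)-f(x)=0$ as a low-degree polynomial equation in $y$ over $\f_q$, and apply a trace/solvability criterion for cubics in characteristic~$3$ together with a valuation obstruction at the irreducible quadratic $X^2-r^2$; but I would keep the power-sum route as the primary line since it parallels the preceding theorems most directly.
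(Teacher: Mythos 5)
Your overall strategy (normalize, then compute successive power sums via \eqref{2.4} until one fails to vanish) is the right one, but two concrete steps in your plan break down. First, the normalization ``$b=0$ with poles at $\pm r$, $r^2\in\f_q$'' is unobtainable: the shift $X\mapsto X-\beta$ with $\beta=b/(2a)$ that kills the linear term moves the poles to $\pm r+\beta$, whose sum is $2\beta\ne 0$, so you lose the centering $r_1+r_2=0$; you cannot have both. Worse, the centered form is not a free choice here --- the paper's computation \eqref{3.10} is valid in characteristic $3$ and shows that $\sum_{x\in\f_q}f(x)^2=-(1+4br^2)/(2r^2)$, so the PR hypothesis \emph{forces} $b=-1/r^2\ne 0$. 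Treating only the $b=0$ branch therefore does not prove the theorem.

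Second, and more fundamentally, the step on which your plan hinges --- using $\sum_{x\in\f_q}f(x)^3$ to pin down $a$ in analogy with \eqref{3.9} --- yields nothing in characteristic $3$: since cubing is additive, $\sum_{x\in\f_q}f(x)^3=\bigl(\sum_{x\in\f_q}f(x)\bigr)^3$, which is automatically $0$ once the first power sum vanishes. (The paper makes the same observation about $\sum f(x)^2$ being useless in characteristic $3$ in the proof of Theorem~\ref{T3.9}; here it is the cube sum that degenerates.) The actual argument is shorter than you anticipate and requires no determination of $a$ and no search for a threshold $k_0$: after $b=-1/r^2$ is forced by the square sum, one computes $\sum_{x\in\f_q}f(x)^4=0$ but $\sum_{x\in\f_q}f(x)^5=-a/r^2\ne 0$ for \emph{every} $a\in\f_q^*$, which already contradicts \eqref{2.1}. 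Your worry about a uniform bound on $k_0$ and about a quadratic ambiguity in $a$ does not arise. The alternative Williams-type solvability argument you sketch at the end is plausible but would need to be carried out; as written, the primary route you propose does not close.
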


\begin{proof}
Equation~\eqref{3.10} still holds, hence we have $b=-1/r^2$. Under this condition, We find that
\[
\sum_{x\in\f_q}f(x)^4=0,
\]
but
\[
\sum_{x\in\f_q}f(x)^5=-\frac a{r^2}\ne 0.
\]
Thus we have a contradiction.
\end{proof}

Theorems~\ref{T3.5} -- \ref{T3.7} do not cover PRs of the form \eqref{3.6} for the following small values of $q$: $2,2^2,2^3,2^4,3,3^2,5,7$. For these values of $q$, PRs $f$ of the form \eqref{3.6} are determined by computer search. 

\begin{itemize}
\item $q=2$. $f$ is equivalent to precisely one of the following:
\[
X^2+\frac 1{X^2+X+1},\quad X^2+X+\frac X{X^2+X+1}.
\]

\item $q=2^2$. Fix $u\in\f_{2^2}$ such that $u^2+u+1=0$. Then $f$ is equivalent to precisely one of the following:
\begin{align*}
&X^2+\frac 1{X^2+X+u},\quad uX^2+X+\frac 1{X^2+X+u},\cr
& X^2+X+\frac u{X^2+uX+1},\quad X^2+X+\frac {1+u}{X^2+(1+u)X+1},\cr
&(1+u)X^2+X+\frac 1{X^2+X+u}.
\end{align*}

\item $q=2^3$. Fix $u\in\f_{2^3}$ such that $u^3+u+1=0$. Then $f$ is equivalent to precisely one of the following:
\begin{align*}
&(1+u^2)X^2+X+\frac {u^2}{X^2+u^2X+1},\quad (1+u)X^2+X+\frac u{X^2+uX+1},\cr
& (1+u+u^2)X^2+X+\frac {u+u^2}{X^2+(u+u^2)X+1}.
\end{align*}

\item $q=2^4$. $f$ does not exist.

\item $q=3$. $f$ is equivalent to precisely one of the following:
\[
X^2+\frac {X-1}{X^2+1},\quad X^2+X-\frac 1{X^2+1},\quad X^2+X-\frac {X+1}{X^2+1}.
\]

\item $q=3^2$. $f$ does not exist.

\item $q=5$. $f$ is equivalent to precisely one of the following:
\[
X^2+\frac {2X}{X^2-2},\quad X^2-X+\frac {2X}{X^2-3}.
\]

\item $q=7$. $f$ is equivalent to 
\[
X^2+X+\frac {2X}{X^2-5}.
\]

\end{itemize}

\subsection{PRs of the form \eqref{3.7}}\

\medskip

If $f(X)$ is of the form \eqref{3.7} where $b\in\f_{q^3}^*$ is arbitrary, the power sums $\sum_{x\in\f_q}f(x)^k$ are very complicated and the equations $\sum_{x\in\f_q}f(x)^k=0$ do not seem to produce neat conditions on the parameters $a,b,r$. To simplify the computations, {\em we assume that  $b\in\f_q^*$ in \eqref{3.7}}. (PRs of of the form \eqref{3.7} without this assumption have been determined using a different method and the result will appear in a separate forthcoming paper \cite{Hou-ppt}.) 

Since $b\in\f_q^*$, we may further assume that $b=1$ using equivalence. Therefore, throughout this subsection, 
\begin{equation}\label{3.12}
f(X)=aX+\frac 1{X-r}+\frac 1{X-r^q}+\frac 1{X-r^{q^2}},
\end{equation}
where $a\in\f_q^*$ and $r\in\f_{q^3}\setminus\f_q$. Let $r_1=r$, $r_2=r^q$, $r_3=r^{q^2}$.

\medskip
First assume that $\text{char}\,\f_3\ne 3$ and $q\ge 5$. Then we may further assume that $r_1+r_2+r_3=0$. We find that
\begin{align}
\sum_{x\in\f_q}f(x)\,&=\frac 3{(r_1-r_2)(r_2-r_3)(r_3-r_1)}h_1,\cr
\label{3.13}
\sum_{x\in\f_q}f(x)^2\,&=\frac {-3}{(r_1-r_2)^2(r_2-r_3)^2(r_3-r_1)^2}h_2,\\
\sum_{x\in\f_q}f(x)^3\,&=\frac 3{(r_1-r_2)^3(r_2-r_3)^3(r_3-r_1)^3}h_3, \nonumber
\end{align}
where
\begin{align*}
h_1=\,& r_1^2+r_1 r_2+r_2^2, \cr
h_2=\,& 4 a r_1^6+18 a r_1^5 r_2+12 a r_1^4
r_2^2-26 a r_1^3 r_2^3-18 a r_1^2 r_2^4+6
a r_1 r_2^5\cr
&+4 a r_2^6+3 r_1^4+6 r_1^3
r_2+9 r_1^2 r_2^2+6 r_1 r_2^3+3 r_2^4,\cr
h_3=\,& 4 a^2 r_1^{10}+20 a^2 r_1^9 r_2+33 a^2 r_1^8 r_2^2+12
a^2 r_1^7 r_2^3-36 a^2 r_1^6 r_2^4-66 a^2 r_1^5
r_2^5-36 a^2 r_1^4 r_2^6\cr
&+12 a^2 r_1^3 r_2^7+33 a^2
r_1^2 r_2^8+20 a^2 r_1 r_2^9+4 a^2 r_2^{10}+18 a
r_1^8+99 a r_1^7 r_2+153 a r_1^6 r_2^2\cr
&+18 a r_1^5
r_2^3-144 a r_1^4 r_2^4-171 a r_1^3 r_2^5-36
a r_1^2 r_2^6+45 a r_1 r_2^7+18 a r_2^8-16
r_1^6\cr
&-21 r_1^5 r_2-42 r_1^4 r_2^2-139 r_1^3
r_2^3-177 r_1^2 r_2^4-75 r_1 r_2^5-16 r_2^6.
\end{align*}
We further find that
\begin{align*}
\text{Res}(h_1,h_2;r_1)\,&=2^2\, 3^5 r_2^{12},\cr
\text{Res}(h_1,h_3;r_1)\,&=3^6\, 7r_2^{12},
\end{align*}
where $\text{Res}(h_1,h_2;r_1)$ denotes the resultants of $h_1$ and $h_2$ as polynomials in $r_1$. Note that $\text{Res}(h_1,h_2;r_1)\ne 0$ if $\text{char}\,\f_q\ne 2,3$ and $\text{Res}(h_1,h_3;r_1)\ne 0$ if $\text{char}\,\f_q\ne 3,7$. Therefore we have the following theorem.

\begin{thm}\label{T3.8}
Assume that $\text{\rm char}\,\f_q\ne 3$ and $q\ge 5$. Then $f(X)$ in \eqref{3.12} is not a PR of $\Bbb P^1(\f_q)$.
\end{thm}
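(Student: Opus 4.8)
The plan is to assemble three ingredients that are already in place: Hermite's criterion \eqref{2.1}, the power sum identities \eqref{3.13}, and the two resultant evaluations recorded just before the theorem. Suppose, for a contradiction, that $f$ in \eqref{3.12} is a PR of $\pf$. Since $f$ fixes $\infty$, it permutes $\f_q$, so by \eqref{2.1} we have $\sum_{x\in\f_q}f(x)^k=0$ for $1\le k\le q-2$; as $q\ge 5$, this applies in particular to $k=1,2,3$.

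Next I would pass to the normalized form. Because $\text{char}\,\f_q\ne 3$, we may replace $f$ by an equivalent rational function with $r_1+r_2+r_3=0$ (translate $X$ by $(r_1+r_2+r_3)/3\in\f_q$ and absorb the resulting additive constant by an element of $G(q)$); then \eqref{3.13} holds. Since $r\in\f_{q^3}\setminus\f_q$ and $3$ is prime, $r$ generates $\f_{q^3}$ over $\f_q$, so $r_1,r_2,r_3$ are pairwise distinct and the denominators appearing in \eqref{3.13} are nonzero; the scalar $3$ is nonzero as well. Hence the vanishing of $\sum_{x\in\f_q}f(x)^k$ for $k=1,2,3$ forces $h_1=h_2=h_3=0$.

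Finally I would invoke the resultants. Regard $h_1,h_2,h_3$ as polynomials in $r_1$ over $\f_q[r_2]$ (after substituting $r_3=-r_1-r_2$); note that $h_1=r_1^2+r_2r_1+r_2^2$ is monic of degree $2$ in $r_1$, so that $r_1$, being a common root of $h_1$ and $h_2$ (and of $h_1$ and $h_3$), forces $\text{Res}(h_1,h_2;r_1)$ and $\text{Res}(h_1,h_3;r_1)$, viewed as polynomials in $r_2$, to vanish at the value $r_2=r^q$. If $\text{char}\,\f_q\ne 2$, then $2^2\,3^5\ne 0$, so $2^2\,3^5r_2^{12}=0$ gives $r^q=0$, contradicting $r\ne 0$. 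If $\text{char}\,\f_q=2$, then $\text{char}\,\f_q\ne 3,7$, so $3^6\,7r_2^{12}$ is a nonzero scalar times $r_2^{12}$, and its vanishing again forces $r^q=0$, a contradiction. In either case $f$ is not a PR of $\pf$.

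There is no serious computational obstacle here: the closed forms of $h_1,h_2,h_3$ and of the two resultants are established before the theorem, and the argument above is just their bookkeeping. The one point that deserves attention is the split by characteristic — the identity $\text{Res}(h_1,h_2;r_1)=2^2\,3^5r_2^{12}$ degenerates precisely when $\text{char}\,\f_q=2$, and one must then fall back on $\text{Res}(h_1,h_3;r_1)=3^6\,7r_2^{12}$, which is legitimate exactly because $\text{char}\,\f_q\ne 3,7$ in that case; together the two resultants cover every characteristic $\ne 3$, and this is where the hypotheses $\text{char}\,\f_q\ne 3$ and $q\ge 5$ (so that $k=1,2,3\le q-2$) are used.
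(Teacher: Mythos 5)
Your proposal is correct and follows essentially the same route as the paper: the paper's proof of Theorem~\ref{T3.8} is exactly the preceding computation of $\sum_{x\in\f_q}f(x)^k$ for $k=1,2,3$ after normalizing $r_1+r_2+r_3=0$, followed by the observation that the two resultants $\text{Res}(h_1,h_2;r_1)=2^2\,3^5r_2^{12}$ and $\text{Res}(h_1,h_3;r_1)=3^6\,7r_2^{12}$ cannot both vanish when $\text{char}\,\f_q\ne 3$ and $r\ne 0$. Your write-up merely makes explicit the bookkeeping (Hermite's criterion for $k\le q-2$, monicity of $h_1$ in $r_1$, and the characteristic split between $2$ and $7$) that the paper leaves implicit.
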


\medskip
Next, assume that $\text{\rm char}\,\f_q=3$.

\begin{thm}\label{T3.9}
Assume that $q=3^n$, where $n\ge 3$. Then $f(X)$ in \eqref{3.12} is a PR of $\Bbb P^1(\f_q)$ if and only if $r_1+r_2+r_3=0$ and $a=-(r_1-r_2)^{-2}$. 
\end{thm}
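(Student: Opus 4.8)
The plan is to follow the same template used in Theorems~\ref{T3.5}--\ref{T3.7}: compute enough power sums $\sum_{x\in\f_q}f(x)^k$ via formula \eqref{2.4}, extract the necessary conditions on the parameters $a$ and $r=r_1$, and then prove sufficiency by a direct argument that $f(x+y)-f(x)=0$ has no solution $y\in\f_q^*$. For the forward direction, I would first record that in characteristic $3$ the trace condition $\text{Tr}_{q/3}(r_1)=r_1+r_2+r_3$ need not vanish automatically, so unlike the $q\ge 5$ case I cannot normalize it away; instead I would keep $s_1:=r_1+r_2+r_3$, $s_2:=r_1r_2+r_2r_3+r_3r_1$, $s_3:=r_1r_2r_3$ as the symmetric functions (all in $\f_q$) and express each power sum as a rational function in $a,s_1,s_2,s_3$ with denominator a power of the discriminant $\Delta=\prod_{i<j}(r_i-r_j)^2$. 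Computing $\sum f(x)^k$ for $k=1,2,3$ (and possibly up to $k=5$ as in Theorem~\ref{T3.7}) and setting them to $0$ should force, after a resultant/Gr\"obner elimination analogous to the $\text{Res}(h_1,h_2;r_1)$ computation preceding Theorem~\ref{T3.8}, the two conditions $s_1=0$ and $a=-(r_1-r_2)^{-2}$. Since $s_1=0$ in characteristic $3$ means $r$ has trace zero over $\f_q$, note $(r_1-r_2)^2=r_1^2-2r_1r_2+r_2^2=r_1^2+r_1r_2+r_2^2$ when $r_1+r_2+r_3=0$ gives $r_3=-(r_1+r_2)$, and one checks $(r_1-r_2)^2=(r_2-r_3)^2=(r_3-r_1)^2$, so the condition on $a$ is symmetric and well-defined; I would verify this identity explicitly early on since it is what makes the stated answer coherent.

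For the converse, assume $s_1=0$ and $a=-(r_1-r_2)^{-2}=:-\delta^{-1}$ where $\delta=(r_1-r_2)^2\in\f_q^*$. I would compute $f(x+y)-f(x)$ as a single fraction: the denominator is $\prod_i(x-r_i)\prod_i(x+y-r_i)=(x^3-s_1x^2+s_2x-s_3)((x+y)^3-s_1(x+y)^2+s_2(x+y)-s_3)$, which lies in $\f_q[x,y]$ and is nonzero for $x\in\f_q$ since the $r_i\notin\f_q$; the numerator will be $y$ times a polynomial $N(x,y)\in\f_q[x,y]$ of low degree in $y$ (degree $3$ or so), and I need to show $N(x,y)=0$ has no root $y\in\f_q^*$ for any $x\in\f_q$. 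After substituting the normalizations, I expect $N$ to factor or simplify dramatically — the natural hope, by analogy with Theorem~\ref{T3.4} where the analogous numerator factored as $(3r^2+x^2+xy-ry)(3r^2+x^2+xy+ry)$ and with the remark $f_r(X)=-X^3/(r^2(X^2-r^2))$ in characteristic $3$ — is that here $f(X)$ is actually equivalent to a polynomial or a very simple rational function, so that $N$ factors into pieces each of which visibly forces $y\in\{0\}$ or forces an element of $\f_q$ to equal something in $\f_{q^3}\setminus\f_q$. Concretely I would try to guess the closed form of $f$ under the normalization (differentiating $\sum 1/(X-r_i)=$ logarithmic derivative of $x^3+s_2x-s_3$, so $f(X)=aX+(3X^2+s_2)/(X^3+s_2X-s_3)=aX+s_2/(X^3+s_2X-s_3)$ in characteristic $3$ since $3X^2=0$) and then clear denominators to see that permuting $\f_q$ is equivalent to a Hermite-type condition I can settle by a trace argument as in the Remark after Theorem~\ref{T3.5}.

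The main obstacle will be the sufficiency direction: even after simplification, showing $N(x,y)\ne0$ for all $x\in\f_q,\ y\in\f_q^*$ requires either an explicit factorization of $N$ whose factors are manifestly $\f_{q^3}$-irrational, or a trace/valuation argument in the style of the Remark following Theorem~\ref{T3.5} (apply Williams' criterion \cite{Williams-JNT-1975} to a depressed cubic in $y$, then argue the relevant quantity cannot have the required trace because of the valuation at the irreducible cubic $X^3+s_2X-s_3$). With $f(X)=aX+s_2/(X^3+s_2X-s_3)$, the equation $f(x+y)=f(x)$ becomes, after the substitution that kills the linear term, a cubic in $y$ with coefficients rational in $x$; I would compute its discriminant-type invariant, reduce mod the condition $a=-\delta^{-1}$ (and use $\delta=(r_1-r_2)^2$, which in terms of $s_2,s_3$ equals $-s_2$ up to sign — another identity to pin down), and show the resulting expression has odd valuation (here, the characteristic-$3$ analogue: valuation not divisible by $3$, or lying outside the image of $t\mapsto t^3-t$) at the place $X^3+s_2X-s_3$, contradicting solvability. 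The bookkeeping of symmetric functions versus the $r_i$, and keeping track of which normalization is in force, will be the fiddly part; the conceptual content is light once the closed form of $f$ is identified.
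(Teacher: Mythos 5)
Your overall template matches the paper's, and several of your structural observations are correct: when $s_1=0$ one indeed has $(r_1-r_2)^2=(r_2-r_3)^2=(r_3-r_1)^2=-s_2\in\f_q$, and $f(X)=aX+s_2/(X^3+s_2X-s_3)$ in characteristic $3$, which after normalization becomes $\pm\bigl(X+(X^3-X+b)^{-1}\bigr)$ with $X^3-X+b$ irreducible over $\f_q$, i.e.\ $\text{Tr}_{q/3}(b)\ne0$. But there are two concrete gaps. First, in the necessity direction the power sums you propose are not enough: $\sum f(x)$ forces $s_1=0$, the sums for $k=2,3,4$ give nothing new in characteristic $3$, and $\sum f(x)^5$ only yields $a^2(r_1-r_2)^4=1$, i.e.\ $a=\epsilon(r_1-r_2)^{-2}$ with $\epsilon=\pm1$. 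No elimination on the equations up to $k=5$ can distinguish the two signs, since (as the paper's computation shows) $\sum f(x)^k=0$ for all $k\le 7$ in \emph{both} cases; one must go to $k=8$ (after normalizing $r_1-r_2=1$) to find $\sum f(x)^8=1\ne0$ when $\epsilon=+1$. Your plan as written leaves the case $a=+(r_1-r_2)^{-2}$ unexcluded.

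Second, and more seriously, the mechanism you name for sufficiency cannot work. A valuation argument at the place $X^3+s_2X-s_3$ in the style of the Remark after Theorem~\ref{T3.5} (via \cite{Hou-Iezzi-arXiv1906.09487}) shows that a certain trace expression is not \emph{identically} zero in $x$; that is the right tool for disproving permutation behavior, where one assumes the Williams trace vanishes for all $x$ and derives a contradiction, and moreover it only applies for $q$ large. For sufficiency you need the opposite and pointwise statement: the Williams trace is nonzero for \emph{every} $x\in\f_q$. No existence-type (Hasse--Weil or valuation) argument can deliver that. What actually closes the proof is an explicit identity: writing $c=x^3-x+b$, the relevant cubic is $y^3-\alpha^2y+\beta=0$ with $\alpha=1/(1+c^2)$, $\beta=c/(1+c^2)^2$, and
\[
\text{Tr}_{q/3}\Bigl(\frac{\beta}{\alpha^3}\Bigr)=\text{Tr}_{q/3}(c+c^3)=2\,\text{Tr}_{q/3}(c)=-\text{Tr}_{q/3}(c)=-\text{Tr}_{q/3}(b)\ne0,
\]
using $\text{Tr}_{q/3}(c^3)=\text{Tr}_{q/3}(c)$ and $\text{Tr}_{q/3}(x^3-x)=0$; one must also verify $1+c^2\ne0$ (if $c^2=-1$ then $c\in\f_{3^2}$ with $\text{Tr}_{q/3}(c)=0$, contradicting $\text{Tr}_{q/3}(c)=\text{Tr}_{q/3}(b)\ne0$) so that \cite[Theorem~2]{Williams-JNT-1975} applies. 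This per-$x$ trace identity, not a valuation count, is the missing idea in your sufficiency argument.
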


\begin{proof}
Assume that $f(X)$ is a PR of $\Bbb P^1(\f_q)$. We first find that
\[
\sum_{x\in\f_q}f(x)=\frac{(r_1+r_2+r_3)^2}{(r_1-r_2)(r_2-r_3)(r_3-r_1)}.
\]
It follows that 
\begin{equation}\label{3.14}
r_1+r_2+r_3=0.
\end{equation}
Under this condition, $\sum_{x\in\f_q}f(x)^2$ is given by \eqref{3.13}, which is not useful in characteristic $3$. Using \eqref{3.14}, we find that
\[
\sum_{x\in\f_q}f(x)^4=0,
\]
but
\[
\sum_{x\in\f_q}f(x)^5=\frac{a^2}{r_1-r_2}(1-a^2(r_1-r_2)^4).
\]
Thus
\[
a=\epsilon(r_1-r_2)^{-2},\quad \epsilon=\pm1.
\]
Then it is easy to see that $f(X)$ is equivalent to
\[
\epsilon X+\frac1{X-r_1'}+\frac1{X-r_2'}+\frac1{X-r_3'},
\]
where $r_i'=r_i/(r_1-r_2)$. Writing $r_i'$ as $r_i$, we may assume that
\[
f(X)=\epsilon X+\frac1{X-r_1}+\frac1{X-r_2}+\frac1{X-r_3},
\]
where $r_1-r_2=1$, i.e., $r_2=r_1-1$ and $r_3=r_1+1$.

First assume that $\epsilon=1$. We find that 
\[
\sum_{x\in\f_q}f(x)^7=0,
\]
but
\[
\sum_{x\in\f_q}f(x)^8=1,
\]
which is a contradiction.

Now assume that $\epsilon=-1$. In this case, $f(X)=-X-(X^3-X+b)^{-1}$, where $(X-r_1)(X-r_2)(X-r_3)=X^3-X+b$ and $b\in\f_q$ is such that $\text{Tr}_{q/3}(b)\ne 0$. We show that $f(X)$ is a PR of $\Bbb P^1(\f_q)$. Using equivalence, we may write
\[
f(X)=X+\frac 1{X^3-X+b}.
\]
We show that $f(X)$ is a PR of $\pf$. Assume to the contrary that $f(X)$ is not a PR of $\pf$. Then there exist $x\in\f_q$ and $y\in\f_q^*$ such that $f(x+y)-f(x)=0$. Let $c=x^3-x+b$. We have
\[
f(x+y)-f(x)=\frac{y(1+c^2-cy-y^2+cy^3)}{(x^3-x+b)((x+y)^3-(x+y)+b)}.
\]
Hence the equation
\begin{equation}\label{eq-y-0}
1+c^2-cy-y^2+cy^3=0
\end{equation}
has a solution $y\in\f_q$. We claim that $1+c^2\ne 0$. Otherwise, $c\in\f_{3^2}\subset\f_q$ and $\text{Tr}_{3^2/3}(c)=0$. However, $\text{Tr}_{q/3}(c)=\text{Tr}_{q/3}(x^3-x+b)=\text{Tr}_{q/3}(b)\ne0$, which is a contradiction.

The substitution $y\mapsto y+c$ transforms \eqref{eq-y-0} into
\[
1-\alpha^2y^2+\beta y^3=0,
\]
where $\alpha=1/(1 + c^2)$ and $\beta=c/(1 + c^2)^2$. So the equation
\begin{equation}\label{alpha}
y^3-\alpha^2y+\beta=0
\end{equation}
has a solution $y\in\f_q$. However, since 
\[
\text{Tr}_{q/3}\Bigl(\frac\beta{\alpha^3}\Bigr)=\text{Tr}_{q/3}(c(1+c^2))=\text{Tr}_{q/3}(c+c^3)=-\text{Tr}_{q/3}(c)=-\text{Tr}_{q/3}(b)\ne0, 
\]
by \cite[Theorem~2]{Williams-JNT-1975}, \eqref{alpha} has no solution in $\f_q$, which is a contradiction. This completes the proof of the theorem.
\end{proof}

\begin{rmk}\label{R4.6}\rm
The PRs in Theorem~\ref{T3.9} are of the form $X+(X^3-X+b)^{-1}$, where $b\in\f_q$ is such that $\text{Tr}_{q/3}(b)\ne 0$. Clearly, all such rational functions are equivalent. In 2008, Yuan et al. \cite{Yuan-Ding-Wang-Pieprzyk-FFA-2008} proved that for $p=2,3$, if $\delta\in\f_{p^n}$ is such that $\text{Tr}_{p^n/p}(\delta)\ne 0$, then $X+(X^p-X+\delta)^{-1}$ permutes $\f_{p^n}$. These are precisely the PRs in Theorems~\ref{T3.3} and \ref{T3.9}.
\end{rmk}

Theorems~\ref{T3.8} and \ref{T3.9} do not cover PRs of the form \eqref{3.12} for the following small values of $q$: $2,2^2,3,3^2$. For these values of $q$, PRs $f$ of the form \eqref{3.12} are determined by computer search.

\begin{itemize}
\item $q=2$. $f$ does not exist.

\item $q=2^2$. $f$ does not exist.

\item $q=3$. $f$ is equivalent to precisely one of the following:
\[
X-\frac 1{X^3-X+1},\quad X+\frac 1{X^3-X+1}.
\]

\item $q=3^2$. $f$ is equivalent to
\[
X+\frac 1{X^3-X+1}.
\]

\end{itemize}



\end{document}